\theoremstyle{plain}% default
\newtheorem{thm}{Theorem}[section]
\theoremstyle{definition}
\begin{document}

\title{Approximations of  additive squares in infinite words}

\author{Tom Brown}
\date{July 2011}
\maketitle

\begin{abstract}  

We show that every infinite word $\omega$ on a finite subset of $\mathbb{Z}$ must contain arbitrarily large factors $B_1B_2$ which are ``close" to being \textit{additive squares}.  We also show that for all $k>1, \ \omega$ must contain a factor  $U_1U_2 \cdots U_k$ where $U_1,U_2, \cdots, U_k$ all have the same \textit{average.}

\end{abstract}

%1.  Introduction
\section{Introduction}

If $S$ is a finite subset of $\mathbb{Z}$ and $\omega \in S^{\mathbb{N}}$, we write $\omega = x_1x_2x_3 \cdots$.  For any (finite) factor $B=x_ix_{i+1} \cdots x_{i+n}$ of $\omega$, we write $|B|$ for the \textit{length} of $B$ (here $|B|= n+1$), and we write $$\sum B = x_i+x_{i+1}+ \cdots +x_{i+n}.$$

If $B_1B_2$ is a factor of $\omega$ with $$|B_1|=|B_2| \ \ \text{and}  \ \sum B_1 = \sum B_2,$$  we say that $B_1B_2$ is an \textit{additive square} contained in $\omega$.  For example, if $\omega = 2 1 3 5 1 2 6 \cdots$ (a word on the alphabet $S=\{1,2,3,4,5,6\}$), then $\omega$ contains the additive square $B_1B_2$, where $B_1 = 1 3 5, B_2 = 1 2 6,$ with $|B_1|=|B_2|=3$ and  $\sum B_1 = \sum B_2=9.$

A celebrated result of V. Ker\"{a}nen \cite{Keranen1} (see also \cite{Keranen2}) is that there exist infinite words $\omega$ on an alphabet of 4 symbols which contain no \textit{abelian square}, that is, $\omega$ contains no factor $B_1B_2$ where $B_1,B_2$ are permutations of one another.  (For early background, see \cite{Brown}.)

After Ker\"{a}nen's result, it was natural to consider the question of whether an infinite word $\omega$ on 4 (or more) integers must contain an \textit{additive square}.  

Allen Freedman \cite{Freedman} showed that if $a,b,c,d \in \mathbb{Z}$ (or more generally if $a,b,c,d$ belong to any field of characterisitic 0) and $a+d=b+c,$ then every word of length 61 on $\{a,b,c,d\}$ contains an additive square.  

Julien Cassaigne, James D. Currie, Luke Schaeffer, and Jeffrey Shallit \cite{CCSS} showed that there is an infinite word $\omega$ on the alphabet $\{0,1,3,4\}$ which contains no \textit{additive cube}, that is, $\omega$ contains no factor $B_1B_2B_3$ such that $|B_1|=|B_2|=|B_3|$ and $\sum B_1 = \sum B_2 = \sum B_3$.

A few remarks follow.

For each $k \ge 1,$ let $g(1,2,\cdots,k)$ denote the length of a longest word on $\{1,2,$ $\cdots,k\}$ which does not contain an additive square. (We allow $g(1,2,\cdots,k) = \infty$.)  Then the following three statements are equivalent:

1. For all $k \ge 1, \ \ g(1,2,\cdots,k) < \infty.$

2. For all  $k \ge 1,$ and all infinite words $\omega$ on $\{1,2,\cdots,k\}, \ \omega$ contains arbitrarily large additive squares.

3. Let $x_1 < x_2 < x_3 \cdots $ be any sequence of positive integers such that, for some $M$, $0<x_{i+1}-x_i <M$ for all $i \ge 1.$  Then there exist $i < j < k$ such that both $\{i,j,k\}$ and $\{x_i,x_j,x_k\}$ are arithmetic progressions.
(Statement 3 is equivalent to statement 1 via van der Waerden's theorem on arithmetic progressions \cite{vdW}.)

Finally, let us denote by $g(a,b,c,d)$ the length of a longest word on $\{a,b,c,d\}$ which does \textit{not} contain an additive square.  Then the statement $$\lim_{n \to \infty}g(1,n,n^2,n^3)=\infty$$ is equivalent (by standard combinatorial arguments) to the result of Ker\"{a}nen stated above.

The question concerning the presence of additive squares seems to have appeared in print for the first time in a paper by Giuseppe Pirillo and Stefano Varricchio \cite{Pirillo}.  Other related material can be found in \cite{abjs}, \cite{Au}, \cite{CCSS}, \cite{CRSZ},  \cite{Freedman}, \cite{Jaroslaw}, \cite{HH}, \cite{Pirillo}, and \cite{RSZ}.

In this note we show that for every finite subset $S$ of $\mathbb{Z}$ there is a constant $C$ (which depends only on $S$) such that every infinite word $\omega$ on $S$ contains arbitrarily long factors $UV$ such that $$|U|= |V| \ \ \text{and} \ \ |\sum U- \sum V| \le C.$$

We also show that for every infinite word $\omega$ on a finite subset of $\mathbb{Z}$ there must exist, for every $k > 1,$ a factor $B_1B_2 \cdots B_k$ of $\omega$ such that $B_1,B_2, \cdots, B_k$  all have the same \textit{average}.  Here, the \textit{average} of a factor $B$ is $\frac{1}{|B|}\sum B.$\

%  2.  nearly equal sums

\section{Adjacent equal length blocks with nearly equal sums}

Here we exploit the fact that if $U,V$ are words on a 2-element subset of $\mathbb{Z}$, then $UV$ is an \textit{additive} square ($|U|=|V|$ and $\sum U = \sum V$) if and only if $UV$ is an \textit{abelian} square  ($U$ and $V$ are permutations of one another).
%  Theorem 2.1

\begin {thm}

For every finite subset $S$ of $\ \mathbb{Z}$ there exists a constant $C$ (depending only on $S$) such that every infinite word $\omega$ on $S$ contains arbitrarily long factors $UV$ such that $$|U|= |V| \ \ \text{and} \ \ |\sum U- \sum V| \le C.$$

\end{thm}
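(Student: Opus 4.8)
The plan is to reduce the general case to the binary case, where an additive square is the same as an abelian square, and then use the fact that abelian squares of every length are unavoidable over any fixed alphabet (an old result going back to Erd\H{o}s's question, proved by Entringer, Jackson, and Schatz, and also deducible from Ker\"{a}nen's work being about the $3$-letter boundary). More precisely, suppose $S = \{s_1 < s_2 < \cdots < s_m\} \subseteq \mathbb{Z}$ and $\omega = x_1 x_2 x_3 \cdots \in S^{\mathbb{N}}$. Fix the target length $n$; we want a factor $UV$ with $|U| = |V| = n$ and $|\sum U - \sum V|$ bounded by a constant depending only on $S$.

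First I would pass to a coarser ``rounding'' of $\omega$. Choose some threshold $t$ somewhere in the middle of the range $[s_1, s_m]$ and define $y_i = 0$ if $x_i \le t$ and $y_i = 1$ otherwise; this gives an infinite binary word $\eta = y_1 y_2 \cdots$. By the unavoidability of abelian squares over a binary alphabet, $\eta$ contains, for every $n$, an abelian square $Y Y'$ with $|Y| = |Y'| = n$ occupying some window of $\omega$, say $Y = y_{i+1}\cdots y_{i+n}$ and $Y' = y_{i+n+1}\cdots y_{i+2n}$. Let $U = x_{i+1}\cdots x_{i+n}$ and $V = x_{i+n+1}\cdots x_{i+2n}$ be the corresponding factors of $\omega$. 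Since $Y$ and $Y'$ have the same number of $1$'s (hence the same number of $0$'s), $U$ and $V$ have the same number of letters that are $> t$ and the same number that are $\le t$. The difference $\sum U - \sum V$ is therefore a sum of $n$ terms, but these terms pair up: writing $U$ and $V$'s ``big'' entries together and their ``small'' entries together, each paired difference lies in $(-(s_m - t), s_m - t)$ or in $(-(t - s_1), t - s_1)$ — wait, this naive pairing is not yet good enough, since $n$ such differences can still add up to something proportional to $n$.

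To fix that, I would not round to a single bit but instead iterate. Partition $[s_1, s_m]$ into intervals and round $\omega$ to a binary word recording membership in the ``lower half'' versus ``upper half''; an abelian square of length $n$ in that binary word gives $U, V$ with equal counts in each half, reducing the problem to controlling $\sum U' - \sum V'$ where $U', V'$ are the restrictions of $U, V$ to each half — except the restrictions need not be contiguous factors of $\omega$. The cleaner route, which I expect to be the actual argument, is: the number of \emph{distinct} abelian-equivalence classes (Parikh vectors) of length-$n$ factors of any fixed word is polynomially bounded, in fact $O(n^{m-1})$ for an $m$-letter alphabet, whereas a suitable counting/pigeonhole over a long enough stretch of $\omega$ forces two length-$n$ factors $U, V$ in the \emph{same position-shifted} family to be abelian-equivalent — but that would give an actual additive square, contradicting nothing, so instead one should use that the Parikh vectors of the ``prefix sums'' move by a bounded amount at each step. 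Concretely: consider the sequence of vectors $v_k \in \mathbb{Z}^m$ counting occurrences of each letter of $S$ in $x_1 \cdots x_k$; then $v_{k+n} - v_k$ ranges over a bounded-diameter ball shifted around, and the genuine obstruction — and the main step of the proof — is to show that among the factors of a single length $n$, two consecutive-window ones of the form $x_{i+1}\cdots x_{i+n}$ and $x_{i+n+1}\cdots x_{i+2n}$ must have Parikh vectors that are equal; that is precisely an abelian square. So the real content is exactly the classical theorem that every infinite word over a finite alphabet contains abelian squares of every length, and the honest proof is: that theorem plus the observation $|\sum U - \sum V| \le (\max S - \min S)\cdot\|\mathrm{Par}(U) - \mathrm{Par}(V)\|_1 / 1$, which is $0$ when $U,V$ are abelian-equivalent.

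Let me restate the clean plan, since the above meanders. The hard part — and essentially the only substantive step — is establishing (or citing) that for every finite alphabet $A$ and every infinite word over $A$, there are abelian squares of arbitrarily large order; I would invoke this as a known combinatorial fact. Granting it, here is the full argument. Given $S \subseteq \mathbb{Z}$ finite, set $C = 0$ if $|S| \le 2$: for $|S| = 2$, abelian square $=$ additive square exactly, so we are done with $C=0$; more interestingly, for general $S$ I claim one cannot in general take $C = 0$ (that would resolve the open additive-square problem), so $C$ must depend on $S$. The way $C$ enters: rather than asking for an abelian square in $\omega$ itself (which we cannot get), apply the abelian-square theorem to the binary word $\eta$ obtained by the median threshold $t$ as above, getting $U, V$ of equal length $n$ with equal numbers of entries $\le t$ and equal numbers $> t$. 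Now bound $\sum U - \sum V$: group the $\le t$ entries of $U$ against those of $V$ (equal in number) and the $> t$ entries likewise. A pairing within the $\le t$ group contributes a difference in $[-(t - s_1), t - s_1]$, within the $> t$ group a difference in $[-(s_m - t), s_m - t)$. This still gives only $|\sum U - \sum V| \le n\cdot\max(t - s_1,\ s_m - t)$, which grows with $n$ — so a single threshold is insufficient, and the resolution is to recurse: within the ``$\le t$'' sub-multiset and ``$> t$'' sub-multiset, repeat with their own medians, using a \emph{multi}-level coloring of $\omega$ (color $x_i$ by the full binary address of which dyadic sub-interval of $[s_1,s_m]$ it falls into, to depth $d = \lceil \log_2 m\rceil$, so each color class is essentially a single value of $S$), apply the abelian-square theorem over this $2^d$-letter alphabet to get $U, V$ of length $n$ with identical Parikh vectors in \emph{every} color — i.e. identical multisets — whence $\sum U = \sum V$ and even $C = 0$ works, which again is too strong. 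The genuine statement with $C > 0$ needed is therefore: apply the abelian-square theorem over an alphabet \emph{coarser} than $S$ but recording enough that leftover discrepancy within each color class is bounded by the diameter of that class times the (bounded) count — no: the count is not bounded. I will therefore present the proof as: round to binary by comparison with \emph{some} value $t$, invoke binary abelian-square unavoidability to get equal $0$-counts and $1$-counts in $U$ and $V$, and bound $|\sum U - \sum V| \le n(s_m - s_1)$ — and then observe this is the \emph{wrong} bound, so the actual theorem must use a sharper tool; I expect the author's proof in fact colors $\omega$ by residues or uses a Ramsey-type iteration so that the \emph{number} of ``wrong'' positions, not just their individual sizes, is bounded by $C = C(S)$. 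The main obstacle, then, is exactly this: converting ``equal counts per color class'' into ``bounded sum-discrepancy'' with a bound independent of $n$, which requires that the coloring be fine enough that within-class discrepancy vanishes, i.e. the abelian-square theorem must be applied over the full alphabet $S$ itself — but $\omega$ over $S$ need not contain abelian squares — so one must instead apply it to a related word (e.g. the word of \emph{differences} $x_{i+1} - x_i$, or of \emph{prefix-sum residues mod some } $q$), and extracting $C$ from that maneuver is the crux I would focus my effort on.
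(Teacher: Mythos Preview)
Your proposal never actually arrives at a proof; you correctly diagnose why each of your attempted reductions fails, but you do not find the idea that makes the argument go through. Two concrete problems. First, you repeatedly appeal to a ``theorem'' that every infinite word over a finite alphabet contains abelian squares of every (or arbitrarily large) length. Over alphabets of size $\ge 4$ this is false: Ker\"{a}nen's result is precisely that abelian squares are \emph{avoidable} on four letters, so any step that applies an abelian-square theorem to a coloring of $\omega$ by four or more colors collapses. Second, your thresholding idea does put you over a binary alphabet (where Entringer--Jackson--Schatz does apply), but as you yourself note the resulting bound on $|\sum U-\sum V|$ is proportional to $n$, not to a constant depending only on $S$; no amount of recursion on thresholds repairs this without reintroducing a $\ge 4$-letter alphabet.

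The missing idea is not to \emph{coarsen} $\omega$ to binary but to \emph{encode} it in binary via unary expansion: after translating $S$ into $\mathbb{N}$, replace each letter $x_i$ by the block $1^{x_i}0$ to obtain $\omega^* = 1^{x_1}0\,1^{x_2}0\,1^{x_3}0\cdots$. In any factor of $\omega^*$, the number of $0$'s counts how many letters of $\omega$ it spans, and the number of $1$'s counts (up to at most $\max S$ at each end) the \emph{sum} of those letters. Now apply Entringer--Jackson--Schatz to $\omega^*$ to get arbitrarily long abelian squares $UV$: equal $0$-counts force the corresponding adjacent blocks in $\omega$ to have equal length, and equal $1$-counts force their sums to differ only by the boundary defects, giving $|\sum U-\sum V|\le 2\max S$ (after translation, $C=2(|\min S|+1+\max S)$). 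This is exactly the maneuver you were groping for in your last paragraph---passing to a ``related word''---but the related word is the unary encoding, not differences or residues.
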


\begin{proof}

First assume that $S$ is a finite subset of $\mathbb{N}$, and let $\omega = x_1x_2x_3 \cdots$ be an infinite word on $S$. Let $1^{x_i}$ denote a string of 1s of length $x_i$ (e.g., $1^4 = 1111$), and let $\omega^*$ be the binary word $1^{x_1}01^{x_2}01^{x_3}0 \cdots $, which we write for convenience as $x_10x_20x_30 \cdots $.  By a theorem of Entringer, Jackson, and Schatz \cite {Entringer} the word $\omega^*$ contains arbitrarily large abelian squares $UV$, and hence arbitrarily large factors $UV$ with $|U|=|V|$ and $\sum U = \sum V$.  Re-numbering the indices for convenience, such a square $UV$, since each of $U$ and $V$ must contain the same number, say $k$, of 0s, has the form $$U = \alpha_2 0 x_2 0 x_3 0 \cdots 0 x_k 0  \alpha_3, V = \alpha_4  x_{k+2} 0 \cdots 0 x_{2k} 0 \alpha_5,$$ where $\alpha_1 + \alpha_2 = x_1, \alpha_3 + \alpha_4 = x_{k+1}, \alpha_5 + \alpha_6 = x_{2k+1}.$  (All the $\alpha_i$ are non-negative integers.)  Since $UV$ is an additive square, 

$$\alpha_2 +\sum _{i=2}^kx_i + \alpha_3  = \alpha_4 +\sum _{i=k+2}^{2k}x_i + \alpha_5,$$ or (using $\alpha_1 + \alpha_2 = x_1$ and $ \alpha_3 + \alpha_4 = x_{k+1}$)

$$|\sum _{i=1}^kx_i - \sum _{i=k+1}^{2k+1}x_i| = |\alpha_1 -2\alpha_3 + \alpha_5| \le 2\max S,$$ hence we have $$|U|=|V| \ \ \text{and} \ \ 
|\sum U-\sum V|\le 2\max S.$$

When  $S$  is a finite subset of $\mathbb{Z}$ which contains non-positive integers, translate $S$ to the right by $|\min S|+1$ and apply the above argument, to get arbitrarily large factors $UV$ such that $$|U| = |V| \ \ \text{and}\ \ |\sum U - \sum V| \le 2(|\min S|+ 1+ \max S).$$

\end{proof}

%  3.  equal averages

\section{Adjacent factors with equal averages}

%  Theorem 3.1

\begin{thm}  

For any finite subset $S$ of $\ \mathbb{Z}$, any infinite word $\omega$ on $S$, and any $k > 1$, there exists a factor $U_1U_2 \cdots U_k$ with $$\frac{1}{|U_1|}\sum U_1 = \frac{1}{|U_2|}\sum U_2 = \cdots = \frac{1}{|U_k|}\sum U_k.$$ 

\end{thm}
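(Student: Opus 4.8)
The plan is to translate the statement into a question about partial sums and then invoke van der Waerden's theorem. Write $\omega = x_1x_2\cdots$, put $S_0 = 0$ and $S_n = x_1 + \cdots + x_n$, and consider the lattice points $P_n = (n, S_n)$ for $n \ge 0$. A factor occupying positions $p+1, \ldots, q$ has average $(S_q - S_p)/(q - p)$, which is exactly the slope of the segment $P_pP_q$. Hence a factor $U_1U_2\cdots U_k$ with $\frac{1}{|U_1|}\sum U_1 = \cdots = \frac{1}{|U_k|}\sum U_k$ exists \emph{if and only if} there are indices $0 \le n_0 < n_1 < \cdots < n_k$ with $P_{n_0}, P_{n_1}, \ldots, P_{n_k}$ collinear: the common slope is then the common average, and $U_i$ is the factor on positions $n_{i-1}+1, \ldots, n_i$. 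So it is enough to show that $\{P_n : n \ge 0\}$ contains $k+1$ collinear points. Before doing so I would observe that adding a fixed constant to every letter shifts every factor average by that constant and so preserves the conclusion; thus we may assume $S \subseteq \{1, 2, 3, \ldots\}$, in which case $(S_n)$ is strictly increasing and $\min S \le S_n/n \le \max S$ for every $n \ge 1$. (Note also that a run of $k$ equal consecutive letters already produces $k+1$ collinear points, so the real content is in words whose runs of equal letters are bounded.)

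To locate $k+1$ collinear points I would set up a finite coloring and apply van der Waerden's theorem. Fix $k$, and let $t$ be a window length to be chosen. Colour each position $n \ge 1$ by the sum $x_n + x_{n+1} + \cdots + x_{n+t-1}$ of the length-$t$ factor starting at $n$; since every such sum lies in the finite set $[t\min S,\, t\max S]\cap\mathbb{Z}$, this is a colouring of $\mathbb{N}$ by finitely many colours. Van der Waerden's theorem then gives, for any prescribed $L$, a monochromatic arithmetic progression of positions $n_0, n_0 + d, n_0 + 2d, \ldots, n_0 + (L-1)d$: the length-$t$ factors beginning at all of these positions have one common sum $v$, hence one common average $v/t$.

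The remaining task — and the crux — is to turn such a progression into $k$ \emph{consecutive} factors with a common average. If van der Waerden returned a step $d$ equal to the window length $t$, the length-$t$ factors at $n_0, n_0 + d, n_0 + 2d, \ldots$ would already be consecutive (and of equal length), and we would be done. The obstacle is that the step $d$ is bounded only by the van der Waerden number for the given number of colours and cannot be prescribed, so in general $d \ne t$ and the monochromatic windows overlap or leave gaps. Subtracting the sums of two overlapping monochromatic windows, however, equates the sums of two shorter blocks of the word; iterating and combining such identities, with $t$ and $L$ chosen large and suitably related to $k$, should allow one to trim and pad the monochromatic windows into $k$ consecutive factors of controlled (but in general unequal) lengths with a common average. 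Carrying this last reduction out cleanly is the main difficulty, and it is exactly here that allowing $|U_1|, \ldots, |U_k|$ to differ is essential: demanding equal lengths would amount to forcing additive $k$-th powers, which — as the introduction recalls — need not exist.
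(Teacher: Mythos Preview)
Your reduction to finding $k+1$ collinear points among the $P_n=(n,S_n)$ is exactly the paper's reduction. The paper then finishes in one line by invoking the Gerver--Ramsey theorem: any sequence of lattice points whose consecutive differences lie in a fixed finite set contains, for every $k$, $k+1$ collinear points. You instead try to \emph{prove} this collinearity statement via van der Waerden, and here the proposal has a real gap.

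The difficulty you flag is genuine and your sketch does not resolve it. From a monochromatic progression $n_0,n_0+d,\ldots,n_0+(L-1)d$ under the window-sum colouring one obtains $S_{n_0+jd+t-1}-S_{n_0+jd-1}=v$ for every $j$. Subtracting consecutive relations yields only that the length-$d$ block starting at position $n_0+jd$ has the same sum as the length-$d$ block starting at position $n_0+jd+t$; these blocks are $t$ positions apart and are \emph{not} adjacent unless $d=t$. When $d>t$ the windows leave gaps of length $d-t$ whose sums you have no control over; when $d<t$ the overlap identities relate blocks that are far from consecutive, and chaining them does not force any of the $P_n$ to line up. ``Trimming and padding'' is not a step but the whole remaining problem: after your application of van der Waerden you are left asking for $k+1$ collinear points among $P_{n_0-1},P_{n_0+d-1},P_{n_0+2d-1},\ldots$, and since the successive differences of this subsequence again lie in a finite set (of size growing with $d$), this is the Gerver--Ramsey statement once more, with no reduction in strength. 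The proposal therefore does not supply the missing idea; the paper sidesteps the issue entirely by citing Gerver--Ramsey as a black box.
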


\begin {proof}

Let $\omega = x_1x_2x_3 \cdots$ be a given infinite word on the set of integers $S=\{s_1, s_2,\cdots, s_t\}.$  Consider the infinite sequence of points in the plane $P_i = (i, x_1 + x_2 + \cdots + x_i), i\ge 1.$  Since $P_{i+1} - P_i = (1, x_{i+1}) \in \{(1, s_j): 1 \le j \le t\}$, a theorem of Gerver and Ramsey \cite{Gerver}  asserts that the set $\{P_i: \ i \ge 1\}$ contains, for any given $k > 1, \ k+1$ collinear points $P_{i_1}  P_{i_2}  \cdots P_{i_{k+1}}$.  For $1 \le j \le k,$ let $U_j=x_{{i_j+1}}x_{{i_j+2}}\cdots x_{i_{j+1}}.$ The slope of the line segment joining $P_{i_{j}}$ and $P_{i_{j+1}}$ is $\frac{1}{|U_j|}\sum U_j.$  Since this slope is the same for each choice of $j$, we have $$\frac{1}{|U_1|}\sum U_1 = \frac{1}{|U_2|}\sum U_2 = \cdots = \frac{1}{|U_k|}\sum U_k.$$ 
\end{proof}

% Acknowledgements

\paragraph{Acknowledgments.} The author would like to thank Allen Freedman, Veso Jungi\'{c}, Hayri Ardal, and Julian Sahasrabudhe for helpful conversations, and to acknowledge the IRMACS Centre at Simon Fraser University for its support.

\bigskip

\noindent\textit{Department of Mathematics,
Simon Fraser University, Burnaby, BC, Canada, V5A 1S6\\
tbrown@sfu.ca\\}

\end{document}